\documentclass{amsart}

\usepackage{amsmath,amsthm,amssymb,amsfonts,parskip, mathtools, yfonts}
\usepackage{tipa}
\usepackage[nobysame]{amsrefs}
\usepackage{color}
\usepackage{enumitem}
\usepackage[all]{xy}
%\usepackage{showkeys}
%\linespread {1.6}

%\usepackage[utf8]{inputenc}
%\pagenumbering{arabic}
%\usepackage[T1]{fontenc}

%\renewenvironment{proof}[1][\proofname]{\begin{trivlist}\item[\hskip \labelsep  \bfseries #1{}.\hspace{10pt}]}
%{\qed\end{trivlist}}
%\renewcommand{\qedsymbol}{$\blacksquare$}

\usepackage{endnotes}
\usepackage[mathscr]{eucal}

\def\mp{{\mathfrak p}}
\def\mm{{\mathfrak m}}

\theoremstyle{plain}
\newtheorem*{Theorem}{Theorem}
\newtheorem{theorem}{Theorem}[section]
\newtheorem{proposition}[theorem]{Proposition}
\newtheorem{lemma}[theorem]{Lemma}
\newtheorem*{Question}{Question}
\newtheorem{corollary}[theorem]{Corollary}
\newtheorem*{Corollary}{Corollary}

\theoremstyle{definition}
\newtheorem{definition}[theorem]{Definition}
\newtheorem{example}[theorem]{Example}

\newtheorem*{Conjecture}{Conjecture}

\theoremstyle{remark}
\newtheorem{remark}[theorem]{Remark}

\numberwithin{equation}{theorem}

\def\lra{\longrightarrow}

%\def \End{ \mbox{End}}

%\def \Hom{ \mbox{ Hom}}

%\def \Im{ \mbox{ Im}}

%\def\ann{\mbox{ann}}

%\def\pdim{\mbox{proj dim}}
%\def\Ass{\mbox{ Ass}}
%\def\Supp{\mbox{ Supp}}
%\def\Ext{\mbox{ Ext}}
% Nice way to TeX sets

%\def\height{\mbox{ht }}
%\def\depth{\mbox{depth }}

%\def\TM{\mbox{Tr}_R(M)}

\DeclarePairedDelimiter{\brackets}{(}{)}
\newcommand{\T}[2]{\operatorname{{\tau}}_{{#2}} (#1)}
\newcommand{\End}[2]{\operatorname{End}_{#1} (#2)}

\renewcommand{\hom}[3]{\operatorname{Hom}_{#1} (#2, #3)}

\newcommand{\IM}{\operatorname{Im} \brackets}

\newcommand{\Ass}{\operatorname{Ass} \brackets}
\newcommand{\Supp}{\operatorname{Supp} \brackets}
\newcommand{\Min}{\operatorname{Min} \brackets}
\newcommand{\Ann}{\operatorname{Ann}_R }

\newcommand{\Ext}[4]{\operatorname{Ext}^{#1}_{#2} (#3, #4)}
\newcommand{\grade}{\operatorname{grade}}
\newcommand{\Spec}{\operatorname{Spec} \brackets}

\begin{document}

\title[]{Self-injective commutative rings \\ have no nontrivial rigid ideals}

\author{Haydee Lindo}

\curraddr{Dept. of Mathematics \& Statistics, Williams College, Williamstown, MA, USA}
\email{haydee.m.lindo@williams.edu}

\date{\today}

\keywords{trace ideal, trace module, Auslander-Reiten conjecture, rigid module, vanishing of Ext, Gorenstein ring}

\subjclass[2010]{13C13, 13D07, 16E30.}

\maketitle

\begin{abstract}
We establish a link between trace modules and rigidity in modules over Noetherian rings. Using the theory of trace ideals  we make partial progress on a question of Dao, and on the Auslander-Reiten conjecture over Artinian Gorenstein rings.
\end{abstract}

\section{Introduction}

Let $R$ be a ring and $M$, $X$ $R$-modules. The \emph{trace module of $M$ in $X$}, denoted  $\T X M$,  is the $R$-module $\sum \alpha (M)$ as $\alpha $ ranges over $\hom R M X$. Such a trace module is \emph{proper} provided $\T X M \subsetneq X$. We identify a link between trace modules over $R$ and the existence of self-extensions of $R$-modules. The main result is

 \begin{Theorem} 
 Let $R$ be an local Artinian Gorenstein ring. If $M$ is a syzygy of a proper trace module then $\Ext 1 R M M \not= 0$.
 \end{Theorem}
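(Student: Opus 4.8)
The plan is to split the argument into two essentially independent parts: a homological reduction that replaces the syzygy $M$ by the proper trace module itself, which uses only that $R$ is self-injective, and then a direct nonvanishing argument for the trace module, which uses only its defining property together with the fact that $R$ is local and Artinian. Write $T$ for the proper trace module of which $M$ is a syzygy, say $T=\tau_X(N)$ with $T\subsetneq X$; we may assume $T\neq 0$, else $M=0$ and there is nothing to say. For the reduction, note that since $R$ is Artinian Gorenstein it is self-injective, so a free $R$-module is injective as well as projective. Hence for any short exact sequence $0\to A\to G\to B\to 0$ with $G$ free, dimension shifting in each variable gives $\operatorname{Ext}^1_R(A,A)\cong\operatorname{Ext}^2_R(B,A)\cong\operatorname{Ext}^1_R(B,B)$, the first isomorphism because $\operatorname{Ext}^{\ge 1}_R(G,-)=0$ and the second because $\operatorname{Ext}^{\ge 1}_R(-,G)=0$. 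Since $M$ is a syzygy of $T$, there is a finite chain of such short exact sequences linking $M$ through successive syzygies down to $T$, so telescoping these isomorphisms yields $\operatorname{Ext}^1_R(M,M)\cong\operatorname{Ext}^1_R(T,T)$. (This is insensitive to whether the resolution chosen is minimal and to any free summands of $M$, since the dimension-shift lemma above imposes no hypothesis on $A$.) Thus it suffices to prove $\operatorname{Ext}^1_R(T,T)\neq 0$ for every nonzero proper trace module $T$; I would also record at this stage that $M\neq 0$, since a proper trace module is never free—because $\tau_X(R^n)=X$—so $T$ has infinite projective dimension and all of its syzygies are nonzero.

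For the nonvanishing, I would first recall the basic closure property of trace modules: if $T=\tau_X(N)$, then $\beta(T)\subseteq T$ for every $\beta\in\operatorname{Hom}_R(T,X)$. Indeed, each $\alpha\colon N\to X$ corestricts to $\alpha'\colon N\to T$ (its image lies in $\tau_X(N)=T$), so $\beta\alpha'\colon N\to X$ again has image inside $\tau_X(N)=T$; since $T=\sum_\alpha\alpha'(N)$, applying $\beta$ shows $\beta(T)\subseteq T$. Consequently, writing $\pi\colon X\twoheadrightarrow X/T$ for the quotient, the composite $\pi\beta$ is zero for every $\beta\colon T\to X$, i.e. the natural map $\pi_*\colon\operatorname{Hom}_R(T,X)\to\operatorname{Hom}_R(T,X/T)$ is the zero map. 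Now apply $\operatorname{Hom}_R(T,-)$ to the short exact sequence $0\to T\to X\xrightarrow{\pi}X/T\to 0$: in the resulting long exact sequence the kernel of the connecting homomorphism $\operatorname{Hom}_R(T,X/T)\to\operatorname{Ext}^1_R(T,T)$ is precisely the image of $\pi_*$, which is $0$. Hence $\operatorname{Hom}_R(T,X/T)$ embeds into $\operatorname{Ext}^1_R(T,T)$.

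It then remains only to check that $\operatorname{Hom}_R(T,X/T)\neq 0$, and this is exactly where properness and the Artinian local hypothesis come in. Since $T\subsetneq X$, the module $X/T$ is nonzero, so over the Artinian local ring $R$ it has nonzero socle, giving an inclusion of the residue field $k\hookrightarrow\operatorname{socle}(X/T)\hookrightarrow X/T$. On the other hand $T\neq 0$, so by Nakayama there is a surjection $T\twoheadrightarrow k$; composing, $T\twoheadrightarrow k\hookrightarrow X/T$ is a nonzero element of $\operatorname{Hom}_R(T,X/T)$. Therefore $\operatorname{Ext}^1_R(T,T)\neq 0$, and by the reduction above $\operatorname{Ext}^1_R(M,M)\neq 0$, as claimed.

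I expect the only steps requiring genuine care to be in the first (homological) part: verifying that the identification $\operatorname{Ext}^1_R(M,M)\cong\operatorname{Ext}^1_R(T,T)$ really is independent of the choice of (possibly non-minimal) free resolution and of free summands of $M$, and that $M$ is genuinely nonzero. Once the closure property $\beta(T)\subseteq T$ is available, the rest of the argument is formal; isolating and using that property—rather than the Gorenstein hypothesis, which enters only through the bookkeeping that lets one pass from $M$ to $T$—is what drives the proof.
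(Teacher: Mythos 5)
Your proposal is correct and takes essentially the same route as the paper: you reduce to showing the proper trace module $T$ itself has a self-extension by noting that $\operatorname{Hom}_R(T,X)\to\operatorname{Hom}_R(T,X/T)$ is zero, so that $\operatorname{Hom}_R(T,X/T)$ embeds into $\operatorname{Ext}^1_R(T,T)$ (this is the paper's Lemma \ref{homtrace} combined with Lemma \ref{chartrace}), and you check $\operatorname{Hom}_R(T,X/T)\neq 0$ using the Artinian local hypothesis (the paper's Remark \ref{set}). The only deviation is that you prove the transfer $\operatorname{Ext}^1_R(M,M)\cong\operatorname{Ext}^1_R(T,T)$ directly by dimension shifting, using that free modules are injective over a self-injective ring, where the paper cites Takahashi and Araya for rigidity passing to (co)syzygies over Gorenstein rings; this is a harmless, self-contained strengthening of that step.
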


Every proper ideal in an Aritinian Gorenstein ring is a proper trace module in $R$; see Proposition \ref{istrace}.  As a consequence, we obtain a positive answer for ideals to a question of Dao \cite{Dao1}, and we settle the Auslander-Reiten conjecture for ideals and their syzygies over Artinian Gorenstein rings.

 \begin{Corollary} 
Let $R$ be an local Artinian Gorenstein ring. If an $R$-module $M$ is rigid and appears as a positive or negative syzygy of an ideal then $M$ is free. In particular, the Auslander-Reiten conjecture holds for all ideals $I \subseteq R$. 
\end{Corollary}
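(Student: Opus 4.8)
\emph{Strategy.} I would reduce the corollary to the single statement that \emph{no proper nonzero ideal of $R$ is rigid}, which follows almost at once from the Theorem, and then transport rigidity and freeness through the syzygy operators. The only outside input is that a local Artinian Gorenstein ring $R$ is quasi-Frobenius: its finitely generated injective modules are exactly the free ones, cosyzygies of finitely generated modules are again finitely generated, and $\Omega$ induces a self-equivalence of the stable module category. Together with the standard isomorphism $\Ext 1 R X Y \cong \sthom R {\Omega X} Y$ over a self-injective ring, this yields $\Ext 1 R {\Omega^{n}A}{\Omega^{n}B}\cong \sthom R {\Omega^{n+1}A}{\Omega^{n}B}\cong \sthom R {\Omega A}{B}\cong \Ext 1 R A B$ for every $n\in\mathbb{Z}$, where $\Omega^{-m}$ denotes the $m$-th cosyzygy and the middle isomorphism comes from applying the equivalence $\Omega^{-n}$. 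So rigidity is preserved by passing to any positive or negative syzygy; and since $\Omega^{n}$ of a free module is free (indeed $0$) for $n\neq 0$, so is freeness. I would also use freely that a direct summand of a rigid module is rigid.

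\emph{Step 1: a rigid ideal is free.} Given $I\subseteq R$, the cases $I=0$ and $I=R$ are immediate. Assume then that $I$ is proper and nonzero; then $I$ is not free (the only free ideals of $R$ are $0$ and $R$), so $\Omega I\neq 0$ (otherwise $I$ would be projective). By Proposition \ref{istrace}, $I$ is a proper trace module in $R$, hence $\Omega I$ is a syzygy of a proper trace module and the Theorem gives $\Ext 1 R {\Omega I}{\Omega I}\neq 0$. Were $I$ rigid, we would have $\Ext 1 R {\Omega I}{\Omega I}\cong \Ext 1 R I I=0$ by the invariance above, a contradiction. Hence a proper nonzero ideal is never rigid; that is, every rigid ideal is free. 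Note the logical shape: the Theorem delivers \emph{non}vanishing, so this step runs by contradiction rather than by a direct computation.

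\emph{Step 2: the general statement and the corollary.} Suppose $M$ is rigid and appears as a positive or negative syzygy of an ideal $I$, say $M\cong \Omega^{n}I\oplus F$ with $F$ free and $n$ a nonzero integer. Then $\Omega^{n}I$ is a direct summand of $M$, so it is rigid, whence $\Ext 1 R I I\cong \Ext 1 R {\Omega^{n}I}{\Omega^{n}I}=0$ and $I$ is rigid; by Step 1, $I$ is free, so $\Omega^{n}I$ is free (in fact $0$) and $M\cong F$ is free, as claimed. Finally, for the Auslander--Reiten conjecture: if $\Ext i R I {I\oplus R}=0$ for all $i\geq 1$, then, $R$ being self-injective, the summand $\Ext i R I R$ vanishes automatically, so in particular $\Ext 1 R I I=0$; thus $I$ is rigid, hence free by Step 1, which is precisely the conclusion of the conjecture.

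\emph{Main obstacle.} With the Theorem and Proposition \ref{istrace} available, I do not anticipate a real obstacle. The one non-formal ingredient is the invariance of rigidity under $\Omega^{\pm}$, and that is exactly where the Gorenstein hypothesis (rather than merely Artinian) is used in an essential way: over a non-self-injective Artinian ring $\Omega$ need not be invertible, cosyzygies need not be finitely generated, and ``negative syzygy'' is not well behaved. Everything else --- the trivial ideals $0$ and $R$, tracking free summands, and the fact that the Auslander--Reiten hypothesis on an ideal $I$ already contains the single relation $\Ext 1 R I I=0$ that we need --- is routine bookkeeping.
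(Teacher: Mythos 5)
Your proposal is correct and follows essentially the same route as the paper: every ideal of an Artinian Gorenstein ring is a trace ideal by Proposition \ref{istrace}, Theorem \ref{ARCAGor} rules out rigidity for nonzero (co)syzygies of a proper nonzero ideal, so a rigid $M$ arising as such a syzygy forces $I$ to be $0$ or $R$ and hence $M$ to be free, with the Auslander--Reiten statement following since its hypothesis already contains rigidity. The only cosmetic difference is that you re-derive the invariance of rigidity under positive and negative syzygies via the stable module category of the self-injective ring, whereas the paper absorbs that step into Theorem \ref{ARCAGor} by citing Takahashi and Araya.
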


Recall that we say a left $R$-module $M$ is \emph{rigid} if $\Ext 1 R M M = 0$ and that a commutative Noetherian ring is Artinian Gorenstein if and only if it is self-injective.

  In Section \ref{prelims} we present the needed results concerning trace modules and establish a key lemma, Lemma \ref{homtrace}. In Section \ref{rigid} we discuss some cases in which rigidity implies projectivity and prove the main result, Theorem \ref{ARCAGor}.

\section{Trace Modules and Trace Ideals} \label{prelims}
Let $R$ be a Noetherian ring.  In this section $R$ need not be commutative. We recall the properties of trace modules needed for subsequent sections.  
%changed from 8.1

Given a  left $R$-module $M$, we write $M^*$ for $\hom R M R$.

\begin{definition}
Let $ M$ and $X$ be left $R$-modules. The \emph{trace module of $M$ in $X$ } is
\[ \T X M := \sum_{\alpha \in \hom R M X} \alpha(M).\]
%corrected from 8.1

We say an $R$-module $M$ is a \emph{trace module} in $X$ provided $M = \T X A \subseteq X$ for some $R$-modules $A$ and $X$. We call such an $M$ a \emph{proper trace module} when containment is strict and a \emph{trace ideal} when $X=R$. 

\end{definition}

\begin{definition}\label{gen}
Let $M$ and $X$ be $R$-modules. We say $X$ is \emph{generated} by $M$ if  $\T X M = X$, that is, $X$ is the homomorphic image of a direct sum of copies of $M$. 
\end{definition}

\begin{lemma} \label{chartrace}
Consider $R$-modules $M\subseteq X$. The following are equivalent:

 \begin{enumerate}[label=(\roman*)]
 \item $M$ is a trace module in $X$;
 \item $M = \T X M$;
 \item $\End R M = \hom R M X$, that is,  every homomorphism from $M$ to $X$ has its image in $M$.
 \end{enumerate}
\end{lemma}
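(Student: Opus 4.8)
The plan is to observe first that, since $M \subseteq X$, the inclusion $\iota \colon M \hookrightarrow X$ is itself an element of $\hom R M X$, so $M = \iota(M) \subseteq \T X M \subseteq X$ for free; hence proving $M = \T X M$ reduces to the reverse inclusion $\T X M \subseteq M$, i.e. to showing $\beta(M) \subseteq M$ for every $\beta \in \hom R M X$. Under the standard identification of $\End R M$ with the submodule of $\hom R M X$ consisting of those maps whose image lies in $M$ (corestriction along $\iota$), this last condition is exactly (iii). So (ii) $\Leftrightarrow$ (iii) is immediate, and (ii) $\Rightarrow$ (i) is immediate too — just take $A = M$ in the definition of trace module.

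The only step with content is (i) $\Rightarrow$ (ii) (equivalently (i) $\Rightarrow$ (iii)). Suppose $M = \T X A = \sum_{\alpha \in \hom R A X} \alpha(A)$ for some module $A$. The crux is that each single summand already lies inside the whole sum, so $\alpha(A) \subseteq M$ for every $\alpha$; thus each $\alpha \colon A \to X$ factors through $M$, as $A \to M \hookrightarrow X$. Now fix any $\beta \in \hom R M X$. For each $\alpha$ the composite $\beta \circ \alpha \colon A \to M \to X$ is again a map in $\hom R A X$, so $\beta(\alpha(A)) = (\beta \circ \alpha)(A) \subseteq M$. Summing over all $\alpha$ gives $\beta(M) = \sum_{\alpha} \beta(\alpha(A)) \subseteq M$, which is precisely the content of (iii); combined with the automatic inclusion $M \subseteq \T X M$, this yields (ii).

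I expect no serious obstacle: the argument is entirely formal and uses neither the Noetherian hypothesis nor commutativity of $R$. The only point requiring care is the bookkeeping around the inclusion $\iota$ — being precise that the equality ``$\End R M = \hom R M X$'' is asserted via corestriction along $\iota$, and noting that the composition $\beta \circ \alpha$ makes sense exactly because $\alpha(A) \subseteq M$ in the first place.
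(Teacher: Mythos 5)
Your proposal is correct and follows essentially the same route as the paper: the only substantive step, (i) $\Rightarrow$ (ii), is handled in the paper by noting that $A$ generates $M$ and $M$ generates $\T X M$, hence $\T X M \subseteq \T X A = M$, which is exactly the composition argument $\beta \circ \alpha \in \hom R A X$ that you spell out explicitly. The remaining equivalences are the same routine inclusions in both proofs, so there is nothing to add.
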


\begin{proof}
$(i\Rightarrow ii)$: Evidently $M \subseteq \T X M$. If $M =\T X A $ for some $R$-module $A$, then $A$ generates $M$ and $M$ generates $\T X M$. It follows that $\T X M \subseteq \T X A = M$.

$(ii\Rightarrow iii)$: One has $\End R M \subseteq \hom R M X$. Now, given  $\alpha$ in $\hom R M X$, by definition $\IM \alpha \subseteq \T X M = M$. Therefore $\hom R M X \subseteq \End R M$.

$(iii\Rightarrow i)$: Given any  $\alpha$ in $\hom R M X$, then $\End R M = \hom R M X$ implies that $\IM \alpha \subseteq M$. It follows that $\T X M \subseteq M$ and therefore $\T X M= M$. \end{proof}

\begin{remark}
By Lemma \ref{chartrace}, to say $M$ is a proper trace module in $X$ is to say  $M \subsetneq X$ and $M = \T X M$. Note,  all modules are trivially trace modules since $M = \T M M = \T M R$. It is left to characterize proper trace modules and trace modules of modules that do not generate the entire category of $R$-modules. 
\end{remark}

\begin{example}
Consider the ring $R= k[[x^3, x^4, x^5]]$,  where $k$ is a field. Note that $R$ is a one-dimensional Cohen-Macualay domain which is not Gorenstein  and the maximal ideal of $R$ is $\mathfrak{m} = (x^3, x^4, x^5)$. The set of trace ideals in $R$ is $\{ (0), \mathfrak{m}, R\}$; see \cite[Example 30]{LucasRTP}
\end{example}

\begin{remark}
Auslander and Green \cite{tracequotient} identify several classes of proper trace modules. Let  $\Lambda$ be an Artin algebra. If $X$ is a $\Lambda$-module, then the socle of $X$ is a trace module in $X$; see the proof of \cite[Proposition 2.5]{tracequotient}. If  $C$ is a non-rigid $\Lambda$-module such that $\End {\Lambda} C$ is a division ring, then $C$ is a proper trace module in some $\Lambda$-module $X$; see \cite[Proposition 2.3]{tracequotient}. Also, if $M\subseteq X$  is a \emph{waist} in $X$ then $M$ is a trace module in $X$, that is, if for any $N \subseteq X$ either $N \subseteq M$ or $M \subseteq N$; see \cite[Proposition 2.1]{tracequotient}. 

See Proposition \ref{istrace} for further examples of  trace ideals. 
\end{remark}

\begin{lemma} \label{homtrace} \label{rigidtrace}
Suppose $M$ is a proper submodule of $X$. If $\hom R M {X/M} = 0$ then $M$ is a trace module in $X$. The converse holds when $M$ is also rigid. 
\end{lemma}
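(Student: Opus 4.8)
The plan is to apply $\hom R M {-}$ to the short exact sequence
\[ 0 \longrightarrow M \longrightarrow X \longrightarrow X/M \longrightarrow 0 \]
and read both implications directly off of the characterization of trace modules in Lemma \ref{chartrace}.

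For the forward implication I would not even need the full long exact sequence. Assuming $\hom R M {X/M} = 0$, I take an arbitrary $\alpha \in \hom R M X$ and post-compose with the canonical surjection $\pi \colon X \to X/M$. The hypothesis forces $\pi \circ \alpha = 0$, so $\IM \alpha \subseteq \K \pi = M$. Hence every homomorphism from $M$ to $X$ lands in $M$, which is condition (iii) of Lemma \ref{chartrace}; therefore $M = \T X M$ and $M$ is a trace module in $X$. Note that no rigidity hypothesis enters here.

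For the converse, assume in addition that $M$ is rigid, i.e. $\Ext 1 R M M = 0$. Applying $\hom R M {-}$ to the displayed sequence yields the exact sequence
\[ 0 \longrightarrow \hom R M M \longrightarrow \hom R M X \longrightarrow \hom R M {X/M} \longrightarrow \Ext 1 R M M, \]
in which the first map is induced by the inclusion $M \hookrightarrow X$. Since $M$ is a trace module in $X$, Lemma \ref{chartrace}(iii) says precisely that this inclusion-induced map is surjective, hence an isomorphism; by exactness the next map $\hom R M X \to \hom R M {X/M}$ is therefore zero, so $\hom R M {X/M}$ embeds into $\Ext 1 R M M = 0$. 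Thus $\hom R M {X/M} = 0$.

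I do not expect a real obstacle here: the only point requiring a little care is to identify the first map of the long exact sequence with the map ``regard an endomorphism of $M$ as a map into $X$,'' so that the trace condition of Lemma \ref{chartrace} becomes exactly its surjectivity, and to observe that rigidity is exactly what kills the obstruction term $\Ext 1 R M M$ on the right. Everything else is routine bookkeeping with the connecting homomorphism.
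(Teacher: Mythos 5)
Your proof is correct and follows essentially the same route as the paper, which applies $\hom R M {-}$ to the same short exact sequence and reads both implications off the resulting four-term exact sequence via Lemma \ref{chartrace}(iii). Your direct argument for the forward implication (post-composing with the projection $X \to X/M$) is just an unwound version of the same exactness statement, so there is no substantive difference.
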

\begin{proof}
Applying $\hom R M -$ to the exact sequence\[ 0 \lra M \overset{}{\lra} X \lra X/M \lra 0.\]

 yields an exact sequence 
\[ 0 \lra \End R M \lra \hom R M X \lra \hom R M {X/M} \lra \Ext 1 R M M.\]

If $\hom R M {X/M} = 0$ then $\End R M = \hom R M X$ and  $M$ is a trace module by Lemma \ref{chartrace}. On the other hand, if $M$ is a rigid trace module in $X$, then $\hom R M {X/M} = 0$ by the exactness of the sequence. \end{proof}

\begin{remark}
One may use Lemma \ref{homtrace} to show that certain proper trace modules cannot be rigid, and also to show that certain rigid trace modules $M$ in $X$ cannot be proper, that is, $M=X$; see, for example, Theorem \ref{ARCAGor} and Proposition \ref{rigiditythm} respectively. 
Note, the converse to the statement in Lemma \ref{homtrace} does not hold when $M$ is not rigid. Section \ref{rigid} focuses on trace ideals such that $\hom R I {R/I} \not=0 $.

\end{remark}

The remainder of this paper considers the consequences of Lemma \ref{homtrace} over commutative Noetherian rings. 

\begin{remark}
Suppose $R$ is commutative. By Hom-Tensor adjunction the hypothesis $\hom R I {R/I} =0 $ is equivalent to $(I/I^2)^* = 0$, where the dual is taken with respect to $R/I$. Lemma \ref{homtrace} proves that $(I/I^2)^* = 0$ implies $I$ is a trace ideal; see Example \ref{zerohom} (b). 

The $R/I$-module $I/I^2$ is called the conormal module of $I$. It is well-known that the conormal module of $I$ can detect if $R/I$ is a complete intersection; see \cite{Vascidealsgen}. Here we see that the conormal module can also detect when $I$ is a trace ideal.
  
\end{remark}

\begin{remark}

Consider the relationship between these three conditions:

 \begin{enumerate}[label=(\roman*)]
 \item $M$ is a  proper trace module in $X$,
 \item  $\hom R M {X/M} \not=0$, \mbox{ and}
 \item $M$ is rigid.
 \end{enumerate}

Lemma \ref{homtrace}  shows that a pair of modules $(M,X)$ cannot have all three. However, Example \ref{zerohom} shows that pairs $(M, X)$ may have any  two of these three properties. 

\end{remark}
\begin{example} \label{zerohom}
\
\begin{itemize}
\item[(a)] Any proper free ideal, $I$, is a rigid ideal and $\hom R I {R/I} \not=0$. For any such $I$ one has $\T R I = R$. 

\item[(b)]\label{conormal0} Let $R = k[[x,y]]/(xy)$. Then $I = (y)$ is a rigid trace ideal which is not free. Here $\hom R I {R/I} =0$ because $\Ann I = (x)$ consist of nonzerodivisors on $R/I \cong k[[x]]$; see Lemma \ref{24} and \cite[Example 1.2]{HWrigidityoftor}.

\item[(c)] Let $R = k[[x,y]]/(xy)$. Then $J = (x,y)$ is a trace ideal with $\hom R J {R/J}\not=0$, that is not rigid and not free. Whenever $R$ is a local ring which is not a DVR, its maximal ideal will be an example of this kind.

\end{itemize}
\end{example}

\section{Rigid Trace Modules over Gorenstein Rings}\label{rigid}

 In this section $R$ is a commutative Noetherian ring. We use Lemma \ref{homtrace} to establish theorems concerning rigid modules.  To that end, we first identify of examples of trace ideals and pairs of $R$-modules $M \subseteq X$ for which $\hom R M {X/M} \not=0$.

 The proof of the result below,  is an application of the proof of Example 2.4 in \cite{Lindo1}. 

\begin{proposition}\label{istrace}
If $\Ext 1 R {R/I} R = 0$ then $I$ is a trace ideal. 
\end{proposition}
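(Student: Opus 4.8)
The plan is to use Lemma \ref{homtrace}: to show $I$ is a trace ideal in $R$ it suffices to prove $\hom R I {R/I} = 0$ (here $I$ is a proper submodule of $R$; the case $I = R$ is trivial, and the case $I=0$ is immediate). So the whole proof reduces to extracting the vanishing of $\hom R I {R/I}$ from the hypothesis $\Ext 1 R {R/I} R = 0$.

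First I would apply $\hom R - R$ to the short exact sequence $0 \to I \to R \to R/I \to 0$, obtaining the exact sequence
\[ 0 \lra (R/I)^* \lra R \lra I^* \lra \Ext 1 R {R/I} R = 0,\]
so the natural map $R = R^* \to I^*$ is surjective; every homomorphism $I \to R$ is the restriction of multiplication by some element of $R$. Equivalently, $I^* \cong R/(0:_R I)$, and the image of any $\varphi \in I^*$ is $rI$ for some $r \in R$, hence is contained in $I$ itself. Therefore $\hom R I R = \End R I$, which by Lemma \ref{chartrace} says exactly that $I$ is a trace ideal; in fact this already shows $\hom R I {R/I}=0$ via the four-term sequence in Lemma \ref{homtrace}.

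Let me restate that last step cleanly, since it is the crux. The long exact sequence from $\hom R I -$ applied to $0 \to I \to R \to R/I \to 0$ reads
\[ 0 \lra \End R I \lra \hom R I R \lra \hom R I {R/I} \lra \Ext 1 R I I.\]
We have just argued $\End R I \to \hom R I R$ is an isomorphism (it is always injective, and surjectivity is what the computation above gives). Hence the connecting map $\hom R I R \to \hom R I {R/I}$ is zero, so $\hom R I {R/I}$ injects into $\Ext 1 R I I$ — which is not obviously zero, so I instead want the cleaner route: since $\End R I \to \hom R I R$ is onto, exactness forces the next map out of $\hom R I R$ to have image zero, i.e. the map $\hom R I R \to \hom R I {R/I}$ is the zero map; but this map is also injective would be false. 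The safe formulation is simply: $\End R I = \hom R I R$ is precisely condition (iii) of Lemma \ref{chartrace}, so $I$ is a trace ideal, and no further diagram chase is needed.

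I do not anticipate a serious obstacle here; the only thing to be careful about is the degenerate cases ($I = 0$ and $I = R$, where "trace ideal" holds vacuously) and making sure the identification $R^* \cong R$ (via $r \mapsto (1 \mapsto r)$, or rather $\mathrm{Hom}_R(R,R) \cong R$) is applied correctly so that the cokernel of $R \to I^*$ is indeed $\Ext 1 R {R/I} R$. The reference to Example 2.4 of \cite{Lindo1} presumably packages exactly this computation, so the argument should be short.
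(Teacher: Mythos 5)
Your final argument is essentially identical to the paper's proof: apply $\hom R - R$ to $0 \to I \to R \to R/I \to 0$, use $\Ext 1 R {R/I} R = 0$ to conclude that $R^* \to I^*$ is surjective, so every homomorphism $I \to R$ is multiplication by a ring element and hence has image inside $I$, giving $\End R I = \hom R I R$ and the conclusion via Lemma \ref{chartrace}. The opening plan of reducing to $\hom R I {R/I} = 0$ (and the momentary claim that this vanishing follows) would not work in general --- the maximal ideal of an Artinian Gorenstein ring is a trace ideal with $\hom R I {R/I} \neq 0$ --- but you correctly retract that detour yourself, and the argument as finally stated is correct.
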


\begin{proof}
Applying $\hom R - R$ to the sequence \[ 0 {\lra} I \overset{i}{\lra} R \lra R/I \lra 0, \] 
 one gets the exact sequence 
\[0 \lra \hom R {R/I} R \lra \hom R R R \overset{i^*}{\lra} \hom R I R \lra \Ext 1 R {R/I} R \lra \cdots .\]

Because $\Ext 1 R {R/I} R = 0$, the map $i^*$ is  surjective and each $R$-homomorphism from $I$ to $R$ is given by multiplication by an element of $R$. Therefore $\hom R I R = \End R I$  and  $I$ is a trace ideal by Lemma \ref{chartrace}.
\end{proof}

\begin{remark} \label{extraceideals}
When $R$ is a commutative Noetherian  ring
\begin{align*}
\grade I &= \mbox{min}\{ i | \Ext i R {R/I} R \not= 0\}, \end{align*}
 by \cite[Theorem 1.2.5]{BandH}. Also, $R$ is Artinian Gorenstein if and only if it is self-injective. It follows from Proposition \ref{istrace} that  $I$ is a trace ideal if

 \begin{enumerate}[label=(\roman*)]
 \item $\grade I \geq 2$ \mbox{ or}
 \item $R$ is an Artinian Gorenstein ring.

 \end{enumerate}
 In fact, a Noetherian ring $R$ is Artinian Gorenstein if and only if every ideal is a trace ideal; see \cite{LLP}. 
However, not every grade zero ideal in a Gorenstein ring is a trace ideal; see Example \ref{hypersurface}. Nevertheless, if $\Min I \cap \Supp I \not= \varnothing$, where $\Min I$ is the set of prime ideals minimally containing $I$, then a standard reduction to the Artinian case still precludes rigidity in proper grade zero ideals in Gorenstein rings; see Proposition \ref{rigiditythm}.

\end{remark}

\begin{example}\label{hypersurface}
Let $R = \mathbb{Q}[x,y]/(x^2y^2)$. For the grade zero ideal $I = (x^5, xy^7)$, $\T I R = (x^2, xy^2)$. To see this, recall that $\T I R$ is the ideal generated by the entries of the left kernel of the presentation matrix of $I$; see \cite[Remark 3.3]{Vascaffine}. 
\end{example}

\begin{definition}
We say $R$ is \emph{generically Gorenstein} provided $R_{\mp}$ is Gorenstein for each $\mp$ minimal in $\Spec R$.
\end{definition}

\begin{proposition} \label{rigiditythm}
 Let $R$ be a local ring that is generically Gorenstein and let $I \subsetneq R$ be an ideal. Suppose $\grade I = 0$ and $\Min I \cap \Supp I \not= \varnothing$ or $\grade I > 1$. Then $I$ is  not rigid. 
 \end{proposition}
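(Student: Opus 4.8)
The plan is to reduce to the two cases separately and in each case produce, via Lemma~\ref{homtrace}, a nonzero element of $\Ext 1 R I I$, thereby contradicting rigidity. Recall from Lemma~\ref{homtrace} that if $I$ is a rigid trace ideal then $\hom R I {R/I}=0$; so it suffices to show that under the stated hypotheses $I$ is a trace ideal and yet $\hom R I {R/I}\neq 0$, or else that $I$ fails to be a trace ideal in a way that still forces a self-extension. The cleanest route is: first settle the case $\grade I>1$ using Remark~\ref{extraceideals}(i), then handle $\grade I=0$ with $\Min I\cap\Supp I\neq\varnothing$ by localizing at a suitable minimal prime.

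First I would treat $\grade I>1$. By Remark~\ref{extraceideals}(i), $I$ is a trace ideal. It remains to show $\hom R I {R/I}\neq 0$; equivalently, by the Hom-Tensor remark, $(I/I^2)^*\neq 0$ over $R/I$. Since $\grade I>1$ and $R$ is local, $I$ contains a regular sequence of length $\geq 2$, so $R/I$ has depth... actually the relevant point is that $I/I^2$ has a free summand or at least a nonzero map to $R/I$; here one invokes that $\grade I \ge 2$ forces $\hom R I R = I^{**}$-type considerations are not quite it. Instead, the standard fact is: if $\grade I \ge 2$ then $\hom R {R/I} R = \Ext 1 R {R/I} R = 0$, and the four-term sequence from Lemma~\ref{homtrace} gives $\hom R I {R/I} \cong \Ext 1 R {R/I} R$ only in special cases — so I would instead argue that $I$ rigid and a trace ideal would force, via Lemma~\ref{homtrace}, $\hom R I {R/I}=0$, i.e. $(I/I^2)^\ast=0$; but then $I/I^2$ would be a torsion-free... no. The honest approach: if $\hom R I {R/I}=0$ then $\Ass(R/I)$ meets $\Supp(I/I^2)=\Supp(R/I)$ trivially, which is absurd since $\Ass(R/I)\subseteq\Supp(R/I)$ and $R/I\neq 0$ is nonzero. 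This is exactly the mechanism of Example~\ref{zerohom}(b) read in reverse, packaged in a lemma I would cite (Lemma~\ref{24}); it applies whenever $I\neq 0$ and $\Ann_R I$ contains a nonzerodivisor fails — and when $\grade I>0$ the annihilator of $I$ can contain no nonzerodivisor only if... hence $\hom R I {R/I}\neq 0$, contradiction.

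Next, the case $\grade I=0$ with $\Min I\cap\Supp I\neq\varnothing$. Pick $\mp\in\Min I\cap\Supp I$. Then $R_\mp$ is Artinian, and generically Gorenstein gives that $R_\mp$ is Artinian Gorenstein. Localization is exact, so $I_\mp$ is a proper ideal of $R_\mp$ (properness because $\mp\in\Supp I$ means $I_\mp\neq R_\mp$... wait, $\mp\in\Supp I$ gives $I_\mp\neq 0$; properness $I_\mp\subsetneq R_\mp$ needs $\mp\supseteq I$, which holds as $\mp\in\Min I$). By Remark~\ref{extraceideals}(ii) applied over the Artinian Gorenstein ring $R_\mp$, $I_\mp$ is a trace ideal of $R_\mp$. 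If $I$ were rigid then $I_\mp$ is rigid over $R_\mp$, so by Lemma~\ref{homtrace}, $\hom {R_\mp} {I_\mp} {R_\mp/I_\mp}=0$. But $R_\mp/I_\mp$ is a nonzero module over the Artinian local ring $R_\mp$, hence has nonzero socle, and since $I_\mp$ is a proper ideal it surjects onto a nonzero quotient — more precisely, one produces a nonzero map $I_\mp\to R_\mp/I_\mp$ by lifting the inclusion of the socle or by the conormal-module argument — giving the contradiction. I expect the main obstacle to be the first case: verifying rigorously that $\grade I>1$ forces $\hom R I {R/I}\neq 0$, which requires the associated-primes/annihilator argument (Lemma~\ref{24}) rather than a naive $\Ext$ computation, since the four-term exact sequence of Lemma~\ref{homtrace} does not by itself pin down $\hom R I {R/I}$. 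Once that lemma is in hand, both cases collapse to "$I$ is a trace ideal + $\hom R I {R/I}\neq 0$ + Lemma~\ref{homtrace} $\Rightarrow$ not rigid."
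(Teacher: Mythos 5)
This is essentially the paper's own proof: in both cases you reduce to exhibiting $I$ (or $I_{\mp}$) as a rigid proper trace ideal with nonvanishing $\hom R I {R/I}$ and invoke Lemma \ref{homtrace}, handling $\grade I > 1$ via Remark \ref{extraceideals}(i) and $\grade I = 0$ by localizing at $\mp \in \Min I \cap \Supp I$ to land in an Artinian Gorenstein local ring where Proposition \ref{istrace} applies and rigidity localizes. The one step you hesitated over is settled exactly as you guessed and exactly as the paper cites it: $\grade I > 0$ means $I$ contains an $R$-regular element, so $\Ann I = 0$ contains no $(R/I)$-regular element, and Lemma \ref{24} (equivalently Remark \ref{set}(ii)) then gives $\hom R I {R/I} \neq 0$.
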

 \begin{proof}
 Assume that $\grade I = 0$. Pick $\mp \in \Min I \cap \Supp I$, then $R_{\mp}$ is Artinian Gorenstein.
 
$I_{\mp}$ is a trace ideal in $R_{\mp}$; see Proposition \ref{istrace}. The ideal $I_{\mp}$ is $\mp R_{\mp}$-primary therefore $\hom {R_{\mp}} {I_{\mp}} {{R_{\mp}}/{I_{\mp}}} \not=0$ by Remark \ref{set} and   \[ 0 = {\Ext 1 R I I}_{\mp} = {\Ext 1 {R_{\mp}} {I_{\mp}} {I_{\mp}}}.\] By Lemma \ref{rigidtrace}, $I_{\mp} =  R_{\mp}$. This contradicts the containment $I \subseteq \mp$. It follows that $\grade I \neq 0$. 
 
Now assume $\grade I \geq 2$. If $I\not= R$, $I$ is a rigid proper trace ideal such that $\hom R I {R/I} \not=0$; see Remark \ref{extraceideals}, Lemma \ref{24} and Remark  \ref{set}. This contradicts Lemma \ref{rigidtrace}, hence $I=R$. 
 \end{proof}

The following result is well-known; see, for example, \cite[Lemma 8.1]{24hrs}. 

\begin{lemma}\label{24}
Let $R$ be a commutative Noetherian ring and let $M$ and $ N$ be finitely generated $R$-modules, such that $M \otimes_R N \not= 0$. Then $\hom R M N =0$ if and only if  $\Ann M$ contains an $N$-regular element. 
\end{lemma}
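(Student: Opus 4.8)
The plan is to prove Lemma \ref{24} by translating the vanishing of $\hom R M N$ into a statement about associated primes and then invoking prime avoidance. First I would recall the standard fact that for finitely generated modules over a Noetherian ring, $\hom R M N = 0$ if and only if no prime in $\Ass N$ contains $\Ann M$; more precisely, a nonzero homomorphism $M \to N$ exists exactly when some associated prime $\mp$ of $N$ contains $\Ann M$, because then $R/\mp$ embeds in $N$ and one can build a map $M \twoheadrightarrow R/\Ann M \to R/\mp \inj N$ using that $\Ann M \subseteq \mp$. Conversely, a nonzero $\varphi \colon M \to N$ has image a nonzero submodule of $N$, so $\Ass(\IM\varphi) \neq \varnothing$; any $\mp \in \Ass(\IM\varphi)$ lies in $\Ass N$ and contains $\Ann(\IM\varphi) \supseteq \Ann M$.

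Next I would connect the existence of an $N$-regular element in $\Ann M$ to this associated-prime condition. The element $x \in \Ann M$ is $N$-regular precisely when $x \notin \bigcup_{\mp \in \Ass N} \mp$. Since $N$ is finitely generated, $\Ass N$ is finite, so by prime avoidance $\Ann M$ contains an $N$-regular element if and only if $\Ann M \not\subseteq \mp$ for every $\mp \in \Ass N$. Combining this with the previous paragraph: $\Ann M$ contains an $N$-regular element $\iff$ no associated prime of $N$ contains $\Ann M$ $\iff$ $\hom R M N = 0$. The hypothesis $M \otimes_R N \neq 0$ is not actually needed for the implication used later (vanishing Hom gives a regular element), but one should note where it matters: it guarantees $\Supp M \cap \Supp N \neq \varnothing$, which keeps the statement from being vacuous; in fact if $M \otimes_R N = 0$ then $\hom R M N = 0$ automatically (locally at any prime, one factor vanishes) yet $\Ann M$ need not meet every associated prime of $N$ outside — so the tensor hypothesis is what makes the "if" direction correct, and I would make sure to use it there.

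The one subtlety to handle carefully is the "if" direction: assuming $\Ann M$ contains an $N$-regular element $x$ and $M \otimes_R N \neq 0$, show $\hom R M N = 0$. Here $x M = 0$ so any $\varphi \colon M \to N$ satisfies $x \varphi(M) = \varphi(xM) = 0$, i.e.\ $\varphi(M) \subseteq (0 :_N x) = 0$ since $x$ is $N$-regular; hence $\varphi = 0$. This direction is in fact immediate and does not even use finite generation or the tensor hypothesis. So the real content is the "only if" direction, and the main obstacle — really the only place any work happens — is the construction of a nonzero map $M \to N$ when $\Ann M \subseteq \mp$ for some $\mp \in \Ass N$: one uses that $R/\mp \inj N$ together with the surjection $M \twoheadrightarrow R/\Ann M$ and the further surjection $R/\Ann M \twoheadrightarrow R/\mp$ (valid because $\Ann M \subseteq \mp$), composing to get a nonzero homomorphism. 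Since this is the well-known argument referenced in \cite[Lemma 8.1]{24hrs}, I would present it briefly and cite that source.
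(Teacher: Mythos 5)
The paper itself offers no proof of this lemma (it is quoted from \cite[Lemma 8.1]{24hrs}), so your argument must stand on its own, and it has a genuine gap at exactly the point you yourself single out as the only place where work happens. The claimed surjection $M \twoheadrightarrow R/\Ann M$ does not exist for a general finitely generated module: it would say that every module surjects onto the cyclic module cut out by its annihilator, which essentially only happens when $M$ has a suitable cyclic direct summand. For instance, take $R = k[[x,y]]$ and $M = (x,y)$; then $\Ann M = 0$, and a surjection $M \twoheadrightarrow R/\Ann M = R$ would split, exhibiting $R$ as a direct summand of the rank-one torsion-free module $M$ and forcing $M \cong R$, which is false. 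So your construction of a nonzero map $M \twoheadrightarrow R/\Ann M \twoheadrightarrow R/\mp \hookrightarrow N$ collapses whenever $M$ is not cyclic, and with it the whole ``only if'' direction as written.

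The statement you need is nonetheless true, but the standard proof localizes instead of mapping through $R/\Ann M$: if $\Ann M \subseteq \mp$ for some $\mp \in \Ass N$, then $\mp \in \Supp M$ (as $M$ is finitely generated), so $M_{\mp} \neq 0$ and Nakayama gives a surjection $M_{\mp} \twoheadrightarrow \kappa(\mp)$, while $\mp \in \Ass N$ gives an injection $\kappa(\mp) \hookrightarrow N_{\mp}$; the composite is a nonzero element of $\hom {R_{\mp}} {M_{\mp}} {N_{\mp}}$, which is isomorphic to $\hom R M N \otimes_R R_{\mp}$ because $M$ is finitely presented over the Noetherian ring $R$, whence $\hom R M N \neq 0$. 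With that repair, your prime-avoidance bookkeeping over the finite set $\Ass N$ and the easy direction are fine. One smaller point: your account of the hypothesis $M \otimes_R N \neq 0$ is internally inconsistent --- you first say it is what makes the ``if'' direction correct and later that this direction uses nothing. In fact the easy implication (an $N$-regular element in $\Ann M$ annihilates the image of any map) never needs it; the hypothesis is relevant to the converse, for example under the convention that an $N$-regular element $x$ must also satisfy $xN \neq N$ (try $R=\mathbb{Z}$, $M=\mathbb{Z}/2$, $N=\mathbb{Z}/3$: here $\hom R M N = 0$, yet no element of $\Ann M = (2)$ is $N$-regular in that stronger sense).
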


\begin{remark} \label{set}
When $R$ is local and $M$ and $N$ are nonzero finitely generated $R$-modules $M\otimes_R N \not= 0$. Therefore, over a local ring, the hypothesis $\hom R M {X/M}\not=0$ in  Lemma \ref{homtrace}   is a mild condition equivalent to \[\Supp M \cap \Ass {X/M} = \Ass {\hom R M {X/M}}  \not=\varnothing.\] Among other pairs of modules, it is held by

 \begin{enumerate}[label=(\roman*)]
 \item  $(I,R)$ for proper $\mm$-primary ideals $I \subseteq R$;
  \item $(I,R)$ for all proper ideals $I \subseteq R$ of positive grade; 
 \item The $R$-modules $(M, X)$ whenever $X/M$ is nonzero and has finite length.
 \end{enumerate}

\end{remark}

  \begin{remark}
Over Gorenstein rings, rigidity passes from an MCM module to its (negative and positive) syzygies; see \cite[Proposition 7.3 (1)(i)]{Takahashi2006}  and \cite[Proposition 7]{ArayaARC}. This allows us to provide a partial answer to Question 9.1.4 in \cite{Dao1}:

 \end{remark}
 
 \begin{Question}
Let $R$ be a commutative Artinian, Gorenstein local ring and $M$ be a finitely generated $R$-module. If $M$ is rigid is $M$ free? 
\end{Question}

 \begin{theorem} \label{ARCAGor}
 Let $R$ be an local Artinian Gorenstein ring. If $M$ is a nonzero syzygy of a proper trace module, then $\Ext 1 R M M \not= 0$. In particular, if $M = \Omega^n I$ for some nonzero ideal $I \subset R$ and  $n \in \mathbb{Z}$, then $M$ is not rigid. 
 \end{theorem}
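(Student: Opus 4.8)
The plan is to reduce the general statement to Lemma~\ref{rigidtrace} via standard properties of syzygies over a self-injective ring. Let $N$ be a proper trace module in some $R$-module $X$, so by Lemma~\ref{chartrace} we have $N = \T X N \subsetneq X$, and suppose $M$ is a nonzero syzygy of $N$, meaning $M = \Omega N$ (a single step suffices after we handle higher syzygies by induction, using that $\Omega^k N = \Omega(\Omega^{k-1} N)$). Since $R$ is Artinian, every finitely generated module has finite length; in particular $X/N$ has finite length, and more to the point the modules in play are all nonzero, so the local hypothesis of Remark~\ref{set} is available. The key observation is that syzygies over a self-injective (equivalently Artinian Gorenstein) ring are highly constrained: any nonzero module $M$ that occurs as a first syzygy is itself a proper submodule of a free module $F$ with $F/M$ nonzero of finite length, and I want to argue that $M$ is a \emph{trace} submodule of $F$.

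First I would establish that a nonzero first syzygy $M \subseteq F$ of a finitely generated module over a self-injective Artinian ring is a trace module in $F$. Here is the mechanism: from $0 \to M \to F \to N \to 0$ with $N = F/M$, applying $\hom R - R$ and using $\Ext 1 R N R = 0$ when... actually the cleanest route is Proposition~\ref{istrace} applied after observing that on a self-injective ring $\Ext 1 R {(-)} R$ vanishes on modules of finite length only in special cases — so instead I would argue directly. Over a self-injective ring $R$, the module $M^{**}$ is canonically $M$ for a syzygy (reflexivity of syzygies over Gorenstein rings), and $\hom R M F = \hom R {M^{**}} F$; combined with the short exact sequence and injectivity of $R$, applying $\hom R M -$ to $0 \to M \to F \to N \to 0$ gives $0 \to \End R M \to \hom R M F \to \hom R M N \to \Ext 1 R M M \to \Ext 1 R M F$, and $\Ext 1 R M F = 0$ since $F$ is injective. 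So the exactness collapses to: either $\hom R M N \ne 0$, or $\End R M = \hom R M F$, i.e. $M = \T F M$, a trace module in $F$.

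Next, combining with Lemma~\ref{rigidtrace}: if $M$ were rigid, $\Ext 1 R M M = 0$, so from the four-term sequence $\hom R M N = 0$ would force $\End R M = \hom R M F$, making $M$ a trace module in $F$; but then Lemma~\ref{rigidtrace} (the converse direction, available since $M$ is rigid) would give $\hom R M {F/M} = 0$, which is fine and consistent — so the contradiction must come from elsewhere. The actual contradiction: since $F/M$ is nonzero of finite length and $M$ is nonzero, Remark~\ref{set}(iii) says $\hom R M {F/M} \ne 0$, contradicting what we just derived. Hence $M$ cannot be rigid, i.e. $\Ext 1 R M M \ne 0$. For the general proper trace module $N'$ rather than a cokernel-of-a-free-cover, I would note $M = \Omega N'$ still sits inside a free module $F$ with $F/M \cong N'$, reducing to the same picture; the finite-length hypothesis on the quotient is automatic in the Artinian setting, so Remark~\ref{set} applies. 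For negative syzygies $\Omega^{-n} I$, I invoke the remark preceding the theorem: over a Gorenstein ring rigidity passes to both positive and negative syzygies of an MCM module, and over an Artinian ring every module is MCM, so a rigid negative syzygy would force $I$ (hence a positive syzygy) to be rigid, which we have excluded.

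The main obstacle I anticipate is the step asserting that a nonzero first syzygy $M \subseteq F$ is a trace module in $F$ precisely when $\hom R M {F/M} = 0$ — the interplay here is delicate because Lemma~\ref{rigidtrace} runs the biconditional only under the rigidity hypothesis, so I must be careful not to argue in a circle. The correct logical flow is: \emph{assume} $\Ext 1 R M M = 0$ for contradiction; then the four-term exact sequence (using $\Ext 1 R M F = 0$ by injectivity of $F$) forces $\hom R M {F/M} = 0$; but $F/M \cong N'$ is nonzero of finite length over the local Artinian ring $R$, so Remark~\ref{set}(iii) gives $\hom R M {F/M} = \hom R M {N'} \ne 0$ — contradiction. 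This is clean and does not require the reflexivity digression at all; the only genuinely needed inputs are injectivity of $R$ (to kill $\Ext 1 R M F$) and the finite-length observation, both of which are in hand. I would double-check that "syzygy" in the theorem means $M = \Omega^n(\text{a proper trace module})$ with $\Omega^n$ taken over a minimal free resolution, so that $F/\Omega^n(\cdot)$ is indeed the relevant proper trace module (or a further syzygy, which is again nonzero and of finite length), keeping Remark~\ref{set} applicable at each stage.
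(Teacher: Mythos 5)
The decisive step in your streamlined argument is a non sequitur. From the exact sequence
\[ 0 \lra \End R M \lra \hom R M F \lra \hom R M {F/M} \lra \Ext 1 R M M \lra \Ext 1 R M F,\]
the assumptions $\Ext 1 R M M = 0$ and $\Ext 1 R M F = 0$ only say that the restriction map $\hom R M F \to \hom R M {F/M}$ is surjective; they do not force $\hom R M {F/M} = 0$. To kill $\hom R M {F/M}$ you would also need the first map to be surjective, i.e. $\End R M = \hom R M F$, which by Lemma \ref{chartrace} is exactly the statement that $M$ is a trace module in $F$ --- the very hypothesis your ``clean'' version claims to dispense with. Since in fact $\hom R M {F/M} \neq 0$ by Remark \ref{set}, all that rigidity of $M$ would yield is that $M$ fails to be a trace module in its free cover: no contradiction. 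Your fallback claim, that a nonzero first syzygy is always a trace submodule of its free cover over a self-injective ring, is false: over $R = k[x]/(x^4)$ take $N = R/(x^2) \oplus R/(x^3)$, so $M = \Omega N = (x^2) \oplus (x^3) \subset R^2 = F$; the map $(a,b) \mapsto (0,a)$ has image $0 \oplus (x^2) \not\subseteq M$, so $\End R M \neq \hom R M F$. (Proposition \ref{istrace} concerns ideals, i.e. submodules of $R$ itself, and does not extend to submodules of free modules of higher rank.) A further warning sign: your final argument never uses that $N'$ is a proper trace module, so if it were correct it would prove that every nonzero rigid module over a local Artinian Gorenstein ring is free, settling Dao's question in full --- far stronger than the theorem.

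The repair is to run Lemma \ref{rigidtrace} where the trace hypothesis is actually available, namely on $N$ inside $X$, and this is the paper's proof: $N = \T X N \subsetneq X$, and $X/N$ is nonzero of finite length since $R$ is Artinian, so $\hom R N {X/N} \neq 0$ by Remark \ref{set}; the converse direction of Lemma \ref{rigidtrace} (equivalently, the four-term sequence applied to $N \subseteq X$, where $\End R N = \hom R N X$ makes $\hom R N {X/N}$ embed into $\Ext 1 R N N$) then forces $\Ext 1 R N N \neq 0$. Non-rigidity is transported from $N$ to $M = \Omega^n N$ by the fact recorded just before the theorem (Takahashi, Araya): over a Gorenstein ring rigidity passes from an MCM module to its positive and negative syzygies, and every module is MCM in the Artinian case --- the same mechanism you already invoke, correctly, for the negative syzygies $\Omega^{-n} I$. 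With that substitution your write-up collapses to the paper's argument, and the detour through the free cover of $M$ should simply be deleted.
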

 
 \begin{proof}
Suppose $M$ is a syzygy of a proper trace module $N$ in $X$. Since $R$ is Artinian, the module $X/N$ has finite length. It follows that $\hom R N {X/N} \not=0$; see Remark \ref{set}. By Lemma \ref{rigidtrace}, $N$ is not rigid. Since rigidity passes to syzygies, it follows that $M$ is not rigid. 
%changed from v 8.1
 \end{proof}

\begin{Conjecture}[Auslander-Reiten conjecture]
Let A be an Artin algebra and $M$ a finitely generated $A$-module.
If $\Ext i A M {M \oplus A} =0$ for all $i >0$ then M is projective.
\end{Conjecture}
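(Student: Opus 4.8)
The plan is first to recognize that this statement is the full Auslander--Reiten conjecture, which remains open for general Artin algebras; what the machinery developed above delivers is not a proof of the conjecture as stated, but a proof in the special setting where $A$ is a commutative Artinian Gorenstein (equivalently self-injective) ring and $M$ is, or is a syzygy of, an ideal. I would therefore argue by contraposition within that setting: assuming $M$ is a nonzero, non-projective module of the relevant form, I would exhibit a nonvanishing self-extension, contradicting the hypothesis $\Ext i A M {M \oplus A} = 0$ for all $i > 0$.

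Concretely, over a commutative self-injective ring $R = A$, every proper ideal is a proper trace module, by Remark \ref{extraceideals} together with \cite{LLP}, and since $R$ is Artinian any quotient $R/I$ has finite length, so Remark \ref{set} gives $\hom R I {R/I} \neq 0$. The key step is then Theorem \ref{ARCAGor}: a nonzero syzygy of a proper trace module has nonzero self-Ext, hence is not rigid. Thus if $M = \Omega^n I$ for a nonzero ideal $I$ and some $n$, then $\Ext 1 R M M \neq 0$, so the vanishing hypothesis already fails at $i=1$ unless $M$ is free. This settles the conjecture for all ideals and their positive and negative syzygies, which is precisely the content of the Corollary in the introduction.

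The hard part --- indeed the reason this is recorded as a conjecture rather than a theorem --- is the passage from this restricted class to an arbitrary finitely generated module over an arbitrary Artin algebra, and I expect no elementary argument to close the gap, since doing so would resolve a long-standing open problem. Two obstacles resist the present approach. First, a general module $M$ need not be a trace module, nor a syzygy of one, so Lemma \ref{rigidtrace} and Theorem \ref{ARCAGor} simply do not apply, and one would need an independent mechanism to detect self-extensions for modules outside this class. Second, the trace-ideal theory is developed using tools specific to the commutative case --- Hom--Tensor adjunction, the conormal module, and the characterization of self-injectivity via trace ideals in \cite{LLP} --- so even extending the ideal case to noncommutative Artin algebras would require substitutes for each of these. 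The honest conclusion of the proposal is therefore that the trace-module method proves the conjecture exactly in the generality claimed by the Corollary, and leaves the general statement open.
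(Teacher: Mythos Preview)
Your assessment is correct: the statement is recorded in the paper as a \emph{Conjecture}, not a theorem, and the paper offers no proof of it in full generality. What the paper does prove is exactly the special case you isolate --- Corollary~\ref{ARCgor} --- and your sketch of that argument (every ideal in an Artinian Gorenstein ring is a trace ideal by Proposition~\ref{istrace}, hence by Theorem~\ref{ARCAGor} a rigid ideal cannot be proper, so it equals $R$ and its syzygies are free) matches the paper's own proof of the corollary essentially line for line. Your diagnosis of why the method does not extend to arbitrary modules or to noncommutative Artin algebras is also accurate and goes slightly beyond what the paper says explicitly.
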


\begin{corollary} \label{ARCgor}
Let $R$ be an local Artinian Gorenstein ring. If an $R$-module $M$ is rigid and appears as a positive or negative syzygy of an ideal, then $M$ is free. In particular, the Auslander-Reiten conjecture holds for all ideals $I \subseteq R$. 
\end{corollary}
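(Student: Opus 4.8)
The plan is to deduce the corollary formally from Theorem \ref{ARCAGor}. Note first that since $R$ is Artinian Gorenstein it is self-injective, so every $R$-module admits a complete resolution; hence negative syzygies $\Omega^{-n}$ are defined and ``positive or negative syzygy of an ideal'' means $M \cong \Omega^n I$ for some ideal $I \subseteq R$ and some $n \in \mathbb{Z}$ --- precisely the situation addressed by Theorem \ref{ARCAGor}, including the case $n = 0$.

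So suppose $M$ is rigid and $M \cong \Omega^n I$ for an ideal $I$ and $n \in \mathbb{Z}$. If $I = 0$ or $I = R$ then $M$ is a syzygy or cosyzygy of a free module and is therefore free. Otherwise $I$ is a nonzero proper ideal, and since $R$ is self-injective we have $\Ext 1 R {R/I} R = 0$, so Proposition \ref{istrace} gives $I = \T R I$; as $I \subsetneq R$ this exhibits $I$ as a proper trace module in $R$ (cf. Remark \ref{extraceideals}(ii)). If $M = 0$ it is free; otherwise $M$ is a nonzero syzygy of the proper trace module $I$, so Theorem \ref{ARCAGor} forces $\Ext 1 R M M \neq 0$, contradicting rigidity. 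Hence $M$ is free in every case. (One could observe that a nonzero proper ideal of an Artinian ring is never free, so this last case is in fact vacuous, but that is not needed.)

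For the final sentence, let $I \subseteq R$ be an ideal with $\Ext i R I {I \oplus R} = 0$ for all $i > 0$. Taking $i = 1$ gives $\Ext 1 R I I = 0$, so $I$ is rigid; and $I \cong \Omega^0 I$ is a syzygy of the ideal $I$, so by the first part $I$ is free, hence projective --- which is the conclusion of the Auslander--Reiten conjecture for $M = I$. I do not expect any genuine difficulty in this corollary: the real content is carried by Theorem \ref{ARCAGor} (and, beneath it, by the descent of rigidity along positive and negative syzygies over Gorenstein rings), and what remains is bookkeeping --- principally the identification, via Proposition \ref{istrace}, of proper ideals of a self-injective ring with proper trace modules, together with the disposal of the degenerate cases $I = 0$ and $I = R$.
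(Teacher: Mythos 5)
Your argument is correct and follows essentially the same route as the paper's proof: identify every proper ideal of the self-injective (Artinian Gorenstein) ring as a proper trace ideal via Proposition \ref{istrace}, apply Theorem \ref{ARCAGor} to rule out rigidity of its nonzero syzygies, and conclude freeness; your only addition is the explicit (harmless) disposal of the degenerate cases $I=0$, $I=R$, $M=0$, which the paper leaves implicit.
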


\begin{proof}
%Every cyclic module is a syzygy of an ideal and 

Every ideal $I$ in an Artinian Gorenstein ring is a trace ideal; see Proposition \ref{istrace}. 
By Theorem \ref{ARCAGor},  a rigid $I$ cannot be a proper trace ideal.  It follows that $I=R$ and that its syzygies are free $R$-modules. \end{proof}
%changed from 8.1
 
 \begin{remark}
 It is left to determine the full set of modules that can be realized as the syzygies of proper trace modules over Artinian Gorenstein rings. \end{remark}

\section*{Acknowledgements}

Thanks to  Srikanth Iyengar for detailed feedback on multiple drafts of this paper. Thanks to Andrew Bydlon, Jake Levinson and Adam Boocher for many key conversations. 

\bibliographystyle{amsplain}	% (uses file "plain.bst")
%\bibliography{myrefs}		% expects file "myrefs.bib"

\begin{bibdiv}
\begin{biblist}

\bib{ArayaARC}{article}{
      author={Araya, Tokuji},
       title={The {A}uslander-{R}eiten conjecture for {G}orenstein rings},
        date={2009},
        ISSN={0002-9939},
     journal={Proc. Amer. Math. Soc.},
      volume={137},
      number={6},
       pages={1941\ndash 1944},
         url={http://dx.doi.org/10.1090/S0002-9939-08-09757-8},
      review={\MR{2480274}},
}

\bib{tracequotient}{article}{
      author={Auslander, Maurice},
      author={Green, E.~L.},
       title={Trace quotient modules},
        date={1988},
        ISSN={0019-2082},
     journal={Illinois J. Math.},
      volume={32},
      number={3},
       pages={534\ndash 556},
         url={http://projecteuclid.org/euclid.ijm/1255989004},
      review={\MR{947045}},
}

\bib{BandH}{book}{
      author={Bruns, Winfried},
      author={Herzog, J{\"u}rgen},
       title={Cohen-{M}acaulay rings},
      series={Cambridge Studies in Advanced Mathematics},
   publisher={Cambridge University Press},
     address={Cambridge},
        date={1993},
      volume={39},
        ISBN={0-521-41068-1},
      review={\MR{1251956 (95h:13020)}},
}

\bib{Dao1}{incollection}{
      author={Dao, Hailong},
       title={Some homological properties of modules over a complete
  intersection, with applications},
        date={2013},
   booktitle={Commutative algebra},
   publisher={Springer},
     address={New York},
       pages={335\ndash 371},
         url={http://dx.doi.org/10.1007/978-1-4614-5292-8_10},
      review={\MR{3051378}},
}

\bib{HWrigidityoftor}{article}{
      author={Huneke, Craig},
      author={Wiegand, Roger},
       title={Tensor products of modules and the rigidity of {${\rm Tor}$}},
        date={1994},
        ISSN={0025-5831},
     journal={Math. Ann.},
      volume={299},
      number={3},
       pages={449\ndash 476},
         url={http://dx.doi.org/10.1007/BF01459794},
      review={\MR{1282227 (95m:13008)}},
}

\bib{24hrs}{book}{
      author={Iyengar, Srikanth~B.},
      author={Leuschke, Graham~J.},
      author={Leykin, Anton},
      author={Miller, Claudia},
      author={Miller, Ezra},
      author={Singh, Anurag~K.},
      author={Walther, Uli},
       title={Twenty-four hours of local cohomology},
      series={Graduate Studies in Mathematics},
   publisher={American Mathematical Society, Providence, RI},
        date={2007},
      volume={87},
        ISBN={978-0-8218-4126-6},
         url={http://dx.doi.org/10.1090/gsm/087},
      review={\MR{2355715}},
}

\bib{Lindo1}{article}{
      author={Lindo, Haydee},
       title={Trace ideals and centers of endomorphism rings of modules over
  commutative rings},
        date={2017},
        ISSN={0021-8693},
     journal={J. Algebra},
      volume={482},
       pages={102\ndash 130},
         url={http://dx.doi.org/10.1016/j.jalgebra.2016.10.026},
      review={\MR{3646286}},
}

\bib{LLP}{unpublished}{
      author={Lindo, Haydee},
      author={Pande, Nina},
       title={Trace ideals and the {G}orenstein property},
        date={2017},
        note={In preparation},
}

\bib{LucasRTP}{article}{
      author={Lucas, Thomas~G.},
       title={The radical trace property and primary ideals},
        date={1996},
        ISSN={0021-8693},
     journal={J. Algebra},
      volume={184},
      number={3},
       pages={1093\ndash 1112},
         url={http://dx.doi.org/10.1006/jabr.1996.0301},
      review={\MR{1407887}},
}

\bib{Takahashi2006}{article}{
      author={Takahashi, Ryo},
       title={Remarks on modules approximated by {G}-projective modules},
        date={2006},
        ISSN={0021-8693},
     journal={J. Algebra},
      volume={301},
      number={2},
       pages={748\ndash 780},
         url={http://dx.doi.org/10.1016/j.jalgebra.2005.09.033},
      review={\MR{2236766}},
}

\bib{Vascidealsgen}{article}{
      author={Vasconcelos, Wolmer~V.},
       title={Ideals generated by {$R$}-sequences},
        date={1967},
        ISSN={0021-8693},
     journal={J. Algebra},
      volume={6},
       pages={309\ndash 316},
         url={http://dx.doi.org/10.1016/0021-8693(67)90086-5},
      review={\MR{0213345}},
}

\bib{Vascaffine}{article}{
      author={Vasconcelos, Wolmer~V.},
       title={Computing the integral closure of an affine domain},
        date={1991},
        ISSN={0002-9939},
     journal={Proc. Amer. Math. Soc.},
      volume={113},
      number={3},
       pages={633\ndash 638},
         url={http://dx.doi.org/10.2307/2048595},
      review={\MR{1055780 (92b:13013)}},
}

\end{biblist}
\end{bibdiv}

\end{document}